\newtheorem{obs} [subsection]{Remark}
\newtheorem{exm} [subsection]{Example}
\newtheorem{prop}[subsection]{Proposition}
\newtheorem{conj}[subsection]{Conjecture}
\newtheorem{teor}[subsection]{Theorem}
\newtheorem{lema}[subsection]{Lemma}
\newtheorem{cor} [subsection]{Corollary}
\newcommand{\Zng}{$\mathbb Z^n$-graded $S$-module}
\def\sdepth{\operatorname{sdepth}}
\def\qdepth{\operatorname{qdepth}}
\def\depth{\operatorname{depth}}
\def\supp{\operatorname{supp}}
\def\deg{\operatorname{deg}}
\def\pd{\operatorname{pd}}
\def\reg{\operatorname{reg}}
\begin{document}
\selectlanguage{english}
\frenchspacing

\large
\begin{center}
\textbf{On the quasi-depth of squarefree monomial ideals and the sdepth of the monomial ideal of independent sets of a graph}

Mircea Cimpoea\c s
\end{center}
\normalsize

\begin{abstract}
If $J\subset I$ are two monomials ideals, we give a practical upper bound for the Stanley depth of $J/I$, which we call it the \emph{quasi-depth} of $J/I$. We compute the quasi-depth of several classes of square free monomial ideals.
Also, we study the Stanley depth of the monomial ideal associated to the independent sets of a graph.

\noindent \textbf{Keywords:} Stanley depth, monomial ideal, independent sets of a graph.

\noindent \textbf{2010 Mathematics Subject
Classification:} 13C15, 13P10, 13F20, 05C69.\end{abstract}
\section*{Introduction}

Let $K$ be a field and $S=K[x_1,\ldots,x_n]$ the polynomial ring over $K$.
Let $M$ be a \Zng. A \emph{Stanley decomposition} of $M$ is a direct sum $\mathcal D: M = \bigoplus_{i=1}^rm_i K[Z_i]$ as a $\mathbb Z^n$-graded $K$-vector space, where $m_i\in M$ is homogeneous with respect to $\mathbb Z^n$-grading, $Z_i\subset\{x_1,\ldots,x_n\}$ such that $m_i K[Z_i] = \{um_i:\; u\in K[Z_i] \}\subset M$ is a free $K[Z_i]$-submodule of $M$. We define $\sdepth(\mathcal D)=\min_{i=1,\ldots,r} |Z_i|$ and $\sdepth_S(M)=\max\{\sdepth(\mathcal D)|\;\mathcal D$ is a Stanley decomposition of $M\}$. The number $\sdepth_S(M)$ is called the \emph{Stanley depth} of $M$. In \cite{apel}, J.\ Apel restated a conjecture firstly given by Stanley in \cite{stan}, namely that $\sdepth_S(M)\geq\depth_S(M)$ for any \Zng $\;M$. This conjecture proves to be false, in general, for $M=S/I$ and $M=J/I$, where $I\subset J\subset S$ are monomial ideals, see \cite{duval}.

Herzog, Vladoiu and Zheng show in \cite{hvz} that $\sdepth_S(M)$ can be computed in a finite number of steps if $M=I/J$, where $J\subset I\subset S$ are monomial ideals.  However, it is difficult to compute this invariant, even in some very particular cases. In \cite{rin}, Rinaldo give a computer implementation for this algorithm, in the computer algebra system $CoCoA$ \cite{cocoa}. However, it is difficult to compute this invariant, even in some very particular cases.  For instance in \cite{par} Biro et al. proved that $\sdepth(m)= \left\lceil n/2 \right\rceil$ where $m=(x_1,\ldots,x_n)$.

In the first section, we give an upper bound for the $\sdepth(J/I)$, where $I\subset J\subset S$ are two squarefree monomial ideals, in numerical terms of the associated poset $\mathcal P_{J/I}$, see Proposition $1.1$ and Corollary $1.2$. We call this bound, the quasi-depth of $J/I$, and we denoted by $\qdepth(J/I)$. In Theorem $1.10$, we give a sharp upper bound for the $\sdepth(S/I)$, where $I\subset S$ is a monomial ideal generated by squarefree monomials of degree $m$. Our method can be useful in many cases. We consider the examples given by Duval, which disprove the Stanley Conjecture, see Example $1.4$ and $1.5$. 
In the second section, where we consider monomial ideals of independent sets associated to graphs. We give effective bounds for the sdepth of quotient ring of these ideals, see Theorem $2.4$ and Corollary $2.6$. We also propose an explicit formula, see Conjecture $2.7$.

\footnotetext[1]{The support from grant ID-PCE-2011-1023 of Romanian Ministry of Education, Research and Innovation is gratefully acknowledged.}

\newpage
\section{Main results.}

We recall a construction firstly presented by us in \cite{ciclu}.  Let $\mathcal P \subset 2^{[n]}$.  If $F,G\in\mathcal P$ with $F\subset G$, the interval $[F,G]$ is the set $\{H\subset \mathcal P\;:\; F\subset H\subset G\}$. Let $\mathbf P: \mathcal P=\bigcup_{i=1}^r [F_i,G_i]$ be a partition of $\mathcal P$, i.e. $[F_i,G_i]$ are pairwise disjoint intervals. We define the \emph{Stanley depth} of $\mathbf P$, the number $\sdepth(\mathbf P)=\min_{i=1}^r\{|G_i|\}$. We define the Stanley depth of $\mathcal P$, the number $\sdepth(\mathcal P)=\max\{\sdepth(\mathbf P)\;:\; \mathbf P $ is a partition of $\mathcal P\}$.

For each $0\leq k\leq n$, we denote $\mathcal P_k=\{A\in\mathcal P\;:\;|A|=k\}$ and $\alpha_k=|\mathcal P_k|$. Assume that $\sdepth(\mathcal P)=d$. It follows that $\mathcal P$ admits a partition $\mathcal P=\bigcup_{i=1}^r [F_i,G_i]$ with $|G_i|\geq d$ for all $i$. Moreover, by \cite[Proposition 2.2]{ciclu}, we can assume that there exists some $1\leq q\leq r$ such that $|G_j|=d$ for all $j\leq q$ and $|F_j|\geq d$ for all $q<j\leq r$. See also, \cite[Theorem 3.5]{rin}.

We define $\beta_0=\alpha_0$, $\beta_1=\alpha_1 - \beta_0 \binom{d}{1}$ and $\beta_k=\alpha_k - \beta_0 \binom{d}{k} - \beta_1 \binom{d-1}{k-1} - \cdots - \beta_{k-1}\binom{d-k}{1}$, for all $1\leq k\leq d$. By the proof of \cite[Theorem 2.4]{ciclu}, we get 
$\beta_k=|\{i\;:|F_i|=k\}$, for all $0\leq k\leq d$. Obviously, the $\beta_k$'s are non-negative. Thus, we have the following:

\begin{prop} (\cite[Theorem 2.4]{ciclu})
If $\mathcal P\subset 2^{[n]}$ with $\sdepth(\mathcal P)\geq d$, then $\beta_k\geq 0$, for all $0\leq k\leq d$.
\end{prop}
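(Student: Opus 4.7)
The plan is to use the normal-form partition of $\mathcal{P}$ already described in the excerpt and extract a closed formula for $\alpha_k$ in terms of the quantities $\gamma_j:=|\{i\,:\,|F_i|=j\}|$, then verify that the recurrence defining $\beta_k$ produces precisely these $\gamma_k$. Since $\sdepth(\mathcal{P})\geq d$, I fix a partition $\mathbf{P}:\mathcal{P}=\bigcup_{i=1}^r[F_i,G_i]$ with $|G_i|\geq d$ for all $i$, and invoke \cite[Proposition 2.2]{ciclu} to arrange $|G_i|=d$ for $i\leq q$ and $|F_i|\geq d$ for $i>q$.

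For each $0\leq k\leq d$, I count the elements of size $k$ in $\mathcal{P}$ by summing over the intervals of the partition. Any $A\in[F_i,G_i]$ with $|A|=k$ satisfies $|F_i|\leq k\leq|G_i|$, and the number of such $A$ equals $\binom{|G_i|-|F_i|}{k-|F_i|}$. For an interval with $i>q$, the condition $|F_i|\geq d\geq k$ forces $|F_i|=k=d$, and the interval contributes exactly $1$. For an interval with $i\leq q$, we have $|G_i|=d$, so the contribution is $\binom{d-|F_i|}{k-|F_i|}$ when $|F_i|\leq k$ and $0$ otherwise. Collecting contributions according to the size $j=|F_i|$ yields
\[
\alpha_k \;=\; \sum_{j=0}^{k}\gamma_j\binom{d-j}{k-j}, \qquad 0\leq k\leq d.
\]

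Now I compare this identity with the recursion $\beta_k=\alpha_k-\sum_{j=0}^{k-1}\beta_j\binom{d-j}{k-j}$. Rewriting the recursion as $\alpha_k=\sum_{j=0}^{k}\beta_j\binom{d-j}{k-j}$ and proceeding by induction on $k$ (the base case $\beta_0=\alpha_0=\gamma_0$ is immediate since $\alpha_0\in\{0,1\}$ counts the unique interval, if any, whose minimum is $\emptyset$), I obtain $\beta_k=\gamma_k$ for every $0\leq k\leq d$. As $\gamma_k$ is the cardinality of a set of indices, $\beta_k\geq 0$.

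The only delicate point is the treatment of the intervals with $i>q$ at level $k=d$: one must be sure these intervals are \emph{also} counted among the $\gamma_d$, and that no other intervals with $|F_i|>d$ contaminate the count for $k\leq d$. Both issues are handled by the dichotomy in the normal form; once that is observed, the induction is purely formal and the non-negativity of the $\beta_k$'s falls out directly from their combinatorial interpretation.
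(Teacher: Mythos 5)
Your proof is correct and follows essentially the same route as the paper: the paper takes the same normal-form partition and quotes the identity $\beta_k=|\{i:|F_i|=k\}|$ from the proof of \cite[Theorem 2.4]{ciclu}, whereas you actually carry out the interval-counting that establishes it, namely $\alpha_k=\sum_{j\leq k}\gamma_j\binom{d-j}{k-j}$ followed by the formal inversion showing $\beta_k=\gamma_k\geq 0$. (Note in passing that the last term of the recurrence as printed in the paper, $\beta_{k-1}\binom{d-k}{1}$, should read $\beta_{k-1}\binom{d-k+1}{1}$ to match the pattern $\beta_j\binom{d-j}{k-j}$ that you correctly use.)
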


Note that the definition of $\beta_k$'s do not depend on the partition of $\mathcal P$. Let $\mathcal P\subset 2^{[n]}$ be an arbitrary poset and let $0\leq d\leq n$ be an integer. We say that the \emph{quasi-depth} of $\mathcal P$ is at least $d$, and we write $\qdepth(\mathcal P)\geq d$, if and only if, by definition, the numbers $\beta_0,\ldots,\beta_d$, defined as above, are non-negative. We say that $\qdepth(\mathcal P)=d$ if $d\leq n$ is the largest integer with the property that $\beta_0,\ldots,\beta_d$ are non-negative. As a direct consequence of Proposition $1.1$, we get:

\begin{cor}
$\sdepth(\mathcal P)\leq \qdepth(\mathcal P)$.
\end{cor}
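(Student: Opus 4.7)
The plan is that this corollary is essentially a direct rephrasing of Proposition 1.1 in the language of the newly defined invariant $\qdepth$, so no substantive new work is needed. I would set $s:=\sdepth(\mathcal P)$ (which is an integer between $0$ and $n$), apply Proposition 1.1 with $d=s$, and conclude that the numbers $\beta_0,\beta_1,\ldots,\beta_s$, computed from the recursion
\[
\beta_0=\alpha_0,\qquad \beta_k=\alpha_k-\sum_{j=0}^{k-1}\beta_j\binom{d-j}{k-j}\quad (1\le k\le d),
\]
with this particular value of $d=s$, are all non-negative. By the definition of quasi-depth given just before the corollary, this is exactly the statement that $\qdepth(\mathcal P)\ge s=\sdepth(\mathcal P)$.

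The one point that deserves a sentence of explicit comment is that the $\beta_k$'s are not intrinsic to $\mathcal P$ alone; they depend on the auxiliary parameter $d$ appearing in the binomial coefficients. For the argument to be clean, I would emphasize that the $d$ used to invoke Proposition 1.1 and the $d$ used to test the condition ``$\qdepth(\mathcal P)\ge d$'' must be the same; since we take both of them equal to $s$, they match and no ambiguity arises.

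There is no real obstacle here: the corollary is a one-line deduction, and the main effort is just to parse the definition of $\qdepth$ and to recognize that Proposition 1.1 asserts precisely the non-negativity condition that defines $\qdepth(\mathcal P)\ge d$.
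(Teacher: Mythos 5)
Your argument is exactly the paper's: the corollary is stated as a direct consequence of Proposition 1.1 together with the definition of $\qdepth$, which is precisely the one-line deduction you give (taking $d=s=\sdepth(\mathcal P)$). Your added remark that the $\beta_k$'s depend on the chosen $d$, so the same value must be used in both places, is a correct and worthwhile clarification but does not change the argument.
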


Of course, the computation of the invariant $\qdepth(\mathcal P)$ is much easier than the computation of $\sdepth(\mathcal P)$, both from a theoretical point of view and, also, practical. Similar techniques were presented in \cite[Section $7.2.1$]{kat} and \cite[Section 5.2]{ichim}, with other terminology and in a more general context.

We recall the method of Herzog, Vladoiu and Zheng \cite{hvz} for computing the Stanley depth of $S/I$ and $I$, where $I$ is a squarefree monomial ideal. Let $G(I)=\{u_1,\ldots,u_s\}$ be the set of minimal monomial generators of $I$. We define the following two posets:
\[ \mathcal P_I:=\{C\subset [n]:\; \supp(u_i)\subset C \;\emph{for\;some}\;i\;\}\;\emph{and}\; 
\mathcal P_{S/I}:=2^{[n]}\setminus \mathcal P_I. \]
Also, if $I\subset J$ are two squarefree monomials ideals, we define $\mathcal P_{J/I}:=\mathcal P_{J}\cap \mathcal P_{S/I}$. Herzog Vladoiu and Zheng proved in \cite{hvz} that $\sdepth(J/I)=\sdepth(\mathcal P_{J/I})$. We define the \emph{quasi-depth} of $J/I$, the number $\qdepth(J/I):=\qdepth(\mathcal P_{J/I})$.

\begin{obs}
\emph{Let $I\subset J\subset S$ be two squarefree monomial ideals, and let $\mathcal P = \mathcal P_{J/I}$. 
As above, we denote $\mathcal P_k=\{A\in\mathcal P\;:\;|A|=k\}$ and $\alpha_k=|\mathcal P_k|$. Let $\bar J=(J,x_1^2,\ldots,x_n^2)$ and $\bar I=(I,x_1^2,\ldots,x_n^2)$. We claim that $\alpha_j=\dim_K (\bar J/\bar I)_j = H_{\bar J/\bar I}(j)$, for all $0\leq j\leq n$, where $H_{\bar J/\bar I}(-)$ is the \emph{Hilbert function} of $\bar J/\bar I$.}

\emph{Indeed, a monomial $u$ is in $\bar J \setminus \bar I$ , if and only if $u$ is square free and $u\in J\setminus I$. Also, the sets of $\mathcal P_k$ are in bijection with the square free monomials of degree $k$ from 
$J \setminus I$. Thus we proved the claim. In particular, if $J=S$, then $\alpha_j=H_{S/\bar I}(j)$. 
Also, if $I=(0)$, then $\alpha_j=H_{\bar J}(j)$.}
\end{obs}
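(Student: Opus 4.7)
The statement is really a direct unpacking of the definitions, so my plan is to separate it into two clean steps: first recognize the graded $K$-basis of $\bar J/\bar I$, and second match its degree-$j$ part bijectively with $\mathcal P_k$.

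For the first step, I would exploit that $\bar J$ and $\bar I$ are monomial ideals, so their quotient has a $K$-basis given by the residues of the monomials that lie in $\bar J$ but not in $\bar I$. Consequently $\dim_K(\bar J/\bar I)_j$ equals the number of monomials of degree $j$ belonging to $\bar J \setminus \bar I$. Now since $(x_1^2,\ldots,x_n^2) \subset \bar I$, any monomial $u$ divisible by some $x_i^2$ is automatically in $\bar I$; thus every monomial representing a nonzero class in $\bar J/\bar I$ must be squarefree. I would then check the two inclusions: if $u$ is squarefree and $u \in \bar J = J + (x_1^2,\ldots,x_n^2)$, then writing the membership in the generating set shows $u \in J$ (the other summand produces only non-squarefree monomials), and symmetrically $u \notin \bar I$ forces $u \notin I$. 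Hence degree-$j$ monomials in $\bar J \setminus \bar I$ are precisely the squarefree monomials of degree $j$ in $J \setminus I$.

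For the second step, I would use the tautological bijection $A \leftrightarrow u_A = \prod_{i\in A} x_i$ between subsets of $[n]$ and squarefree monomials. Since $J$ is squarefree monomial, $u_A \in J$ iff some minimal generator $v$ of $J$ divides $u_A$, which (both sides being squarefree) happens iff $\supp(v) \subset A$; this is exactly the condition $A \in \mathcal P_J$. The same argument applied to $I$ gives $u_A \notin I \Leftrightarrow A \in \mathcal P_{S/I}$. Taking the intersection, $u_A \in J\setminus I$ iff $A \in \mathcal P_J \cap \mathcal P_{S/I} = \mathcal P_{J/I}$, and the degree of $u_A$ equals $|A|$, so the bijection restricts to $\{u \in J\setminus I : u \text{ squarefree}, \deg u = j\} \leftrightarrow \mathcal P_j$, giving $\alpha_j = H_{\bar J/\bar I}(j)$.

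There is no serious obstacle here; the whole argument is a repackaging of how Herzog--Vladoiu--Zheng encode squarefree monomials as subsets, combined with the standard trick of adjoining the squares $x_i^2$ to cut the polynomial ring down to its squarefree part. The only place that requires a moment of care is verifying that a squarefree monomial in $J + (x_1^2,\ldots,x_n^2)$ must already lie in $J$, which follows from comparing supports of the monomial summands in such an expression.
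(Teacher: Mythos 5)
Your proposal is correct and follows essentially the same route as the paper's own (much terser) argument: identifying the degree-$j$ monomials of $\bar J\setminus\bar I$ with the squarefree degree-$j$ monomials of $J\setminus I$, and then with the sets in $\mathcal P_j$ via $A\mapsto\prod_{i\in A}x_i$. You merely spell out the two divisibility checks that the paper leaves implicit.
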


\begin{exm}
\emph{We consider the ideal $I=(x_{13}x_{16}
 ,x_{12}x_{16}
,x_{11}x_{16}
,x_{10}x_{16}
,x_{9}x_{16}
,x_{8}x_{16}
,x_{6}x_{16}, \linebreak
x_{3}x_{16}
,x_{1}x_{16}
,x_{13}x_{15}
,x_{12}x_{15}
,x_{11}x_{15}
,x_{10}x_{15}
,x_{9}x_{15}
,x_{8}x_{15}
,x_{3}x_{15}
,x_{13}x_{14}
,x_{12}x_{14}
,x_{11}x_{14}
,x_{10}x_{14},\linebreak
x_{9}x_{14}
,x_{8}x_{14}
,x_{10}x_{13}
,x_{9}x_{13}
,x_{8}x_{13}
,x_{6}x_{13}
,x_{3}x_{13}
,x_{1}x_{13}
,x_{10}x_{12}
,x_{9}x_{12}
,x_{8}x_{12}
,x_{3}x_{12}
,x_{10}x_{11}
,x_{9}x_{11},\linebreak
x_{8}x_{11}
,x_{6}x_{10}
,x_{3}x_{10}
,x_{1}x_{10}
,x_{3}x_{9}
,x_{5}x_{7}
,x_{3}x_{7}
,x_{2}x_{7}
,x_{1}x_{7}
,x_{5}x_{6}
,x_{2}x_{6}
,x_{1}x_{6}
,x_{4}x_{5}
,x_{3}x_{5}
,x_{1}x_{4}, \linebreak
x_{4}x_{15}x_{16}
,x_{2}x_{15}x_{16}
,x_{2}x_{4}x_{15}
,x_{6}x_{7}x_{14}
,x_{1}x_{5}x_{14}
,x_{4}x_{12}x_{13}
,x_{2}x_{12}x_{13}
,x_{2}x_{4}x_{12}
,x_{6}x_{7}x_{11}
,x_{1}x_{5}x_{11}, \linebreak
x_{4}x_{9}x_{10}
,x_{2}x_{9}x_{10}
,x_{2}x_{4}x_{9}
,x_{6}x_{7}x_{8}
,x_{1}x_{5}x_{8}) \subset S=K[x_1,\ldots,x_{16}]$.}

\emph{According to \cite[Theorem 3.5]{duval}, it follows that $\sdepth(S/I)<\depth(S/I)$. The explicit computation of $\sdepth(S/I)$ is too hard for any computer. However, if we consider the poset $\mathcal P = \mathcal P_{S/I}$, one can easily check that $\alpha_0=1$, $\alpha_1=15$, $\alpha_2=71$, $\alpha_3=98$ and $\alpha_4=42$. For $d=4$, we have $\beta_0=1$, $\beta_1=\alpha_1-4\beta_0 = 12$, $\beta_2=\alpha_2-\binom{4}{2}\beta_0-\binom{3}{1}\beta_1 = 72 - 6 - 36 =30$ and $\beta_3=\alpha_3 - 4\beta_0 - 3\beta_1 - 2\beta_2 = 98-4-36-60 = -2<0$. Therefore, $\qdepth(S/I)<4 = \depth(S/I)=\dim(S/I)$. Now, let $\mathcal P = \mathcal P_I$. It follows that $\alpha_0=\alpha_1=0$, $\alpha_2=\binom{16}{2}-71 = 49$, $\alpha_3=\binom{16}{3}-98 = 462$, $\alpha_4=\binom{16}{4}-42 = 1778$ and $\alpha_k=\binom{16}{k}$ for all $5\leq k\leq 16$. One can easily check that $\qdepth(I)\geq 5$ so, we can expect that $\sdepth(I)\geq \depth(I)=5$. Note that, the Stalney conjecture is still open in the case of ideals.}
\end{exm}

\begin{exm}
\emph{Let $S=K[x_1,\ldots,x_6]$, $I=(x_1x_4x_5, x_4 x_6, x_2x_3x_6)\subset S$ and 
$J=(x_1x_2, x_1x_5, \linebreak x_1x_6, x_2x_3,x_2x_4, x_4x_6)\subset S$. According to \cite[Remark 3.6]{duval}, we have $\sdepth(J/I)=3<\depth(J/I)=4$. However, if we denote $\mathcal P=\mathcal P_{J/I}$, we have $\alpha_0=\alpha_1=0$, $\alpha_2=5$, $\alpha_3=10$ and $\alpha_4=5$. By straightforward computations, we get $\qdepth(J/I)=4$.}
\end{exm}

\begin{prop}
If $\mathbf m=(x_1,\ldots,x_n)$, then $\qdepth(\mathbf m)= \left\lceil \frac{n}{2} \right\rceil$.
\end{prop}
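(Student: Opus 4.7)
The plan is to identify $\mathcal P = \mathcal P_{\mathbf m}$ explicitly and exploit the very simple form of its level sizes. Since the minimal monomial generators of $\mathbf m$ are the singletons $x_1,\ldots,x_n$, one has $\mathcal P_{\mathbf m} = \{C\subseteq [n]\;:\;C\neq\emptyset\}$, hence $\alpha_0=0$ and $\alpha_k=\binom{n}{k}$ for every $1\leq k\leq n$. The recursive definition of the $\beta_k$'s then gives $\beta_0=0$, $\beta_1=n$, and $\beta_2=\binom{n}{2}-n(d-1)=\frac{n(n+1-2d)}{2}$.

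For the upper bound $\qdepth(\mathbf m)\leq \lceil n/2\rceil$, I would take $d=\lceil n/2\rceil+1$ and check, splitting into the parity of $n$, that $n+1-2d<0$, so that $\beta_2<0$; this rules out $d=\lceil n/2\rceil+1$ as an admissible value in the definition of $\qdepth$. For the lower bound, the cleanest route is to invoke Corollary $1.2$ together with the theorem of Biro et al.\ recalled in the introduction, namely $\sdepth(\mathbf m)=\lceil n/2\rceil$, which yields $\qdepth(\mathbf m)\geq\sdepth(\mathbf m)=\lceil n/2\rceil$. Combining the two inequalities gives $\qdepth(\mathbf m)=\lceil n/2\rceil$.

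The only potentially delicate point is the lower bound if one insists on a fully self-contained argument: proving directly that $\beta_0,\ldots,\beta_d$ are all non-negative for $d=\lceil n/2\rceil$ would require either a closed-form identity for $\beta_k=\sum_{j=1}^{k}(-1)^{k-j}\binom{n}{j}\binom{d-j}{k-j}$ or an explicit Stanley partition of $\mathcal P_{\mathbf m}$ realizing the bound. Citing Biro et al.\ and Corollary $1.2$ sidesteps this technical issue entirely, leaving the elementary computation of $\beta_2$ as the only real work in the proof.
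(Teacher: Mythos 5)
Your proposal is correct and matches the paper's own argument essentially step for step: the paper likewise computes $\beta_0=0$, $\beta_1=n$, $\beta_2=\binom{n}{2}-n(d-1)<0$ for $d=\left\lceil n/2\right\rceil+1$ to get the upper bound, and obtains the lower bound from $\sdepth(\mathbf m)=\left\lceil n/2\right\rceil$ (Biro et al.) together with Corollary $1.2$. Nothing is missing.
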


\begin{proof}
Since $\sdepth(\mathbf m)= \left\lceil \frac{n}{2} \right\rceil$, see \cite[Theorem 2.2]{par}, it is enough to prove that for $d:=\left\lceil \frac{n}{2} \right\rceil+1$, the numbers $\beta_k$'s are not all non-negative. Let $\mathcal P=\mathcal P_{\mathbf m}$. We have $\alpha_0=0$ and $\alpha_k=\binom{n}{k}$ for all $1\leq k\leq n$.
We get, $\beta_0=0$, $\beta_1=\alpha_1=n$. 
Therefore $\beta_2=\alpha_2 - \beta_1\binom{d-1}{1} = \binom{n}{2} - n(d-1) = \frac{n}{2}(n-1-2\left\lceil \frac{n}{2} \right\rceil)<0$ and thus we are done.
\end{proof}

\begin{prop}(See \cite[Theorem 1.1]{cim})
Let $I_{n,m}$ be the monomial ideal generated by all the square free monomials of degree $m$.
Then $\sdepth(I_{n,m})\leq \qdepth(I_{n,m}) \leq \left\lceil \frac{n-m}{m+1} \right\rceil + m - 1$.
\end{prop}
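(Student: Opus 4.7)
The plan is to invoke the recursive formula defining the numbers $\beta_k$ together with the very simple description of the $\alpha_k$ attached to $\mathcal P_{I_{n,m}}$, and to produce, at $d := \left\lceil \frac{n-m}{m+1} \right\rceil + m$, an index with $\beta_k < 0$. By definition this forces $\qdepth(I_{n,m}) < d$, and then Corollary $1.2$ supplies the Stanley depth half of the assertion.

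The first step is the structural one. Since every minimal monomial generator of $I_{n,m}$ has support of size exactly $m$, the Herzog--Vladoiu--Zheng description gives
\[ \mathcal P_{I_{n,m}} = \{\, C \subset [n] : |C| \geq m\,\}, \]
so that $\alpha_k = 0$ for $0 \leq k < m$ and $\alpha_k = \binom{n}{k}$ for $m \leq k \leq n$. A trivial induction on $k$ via the recursion $\beta_k = \alpha_k - \sum_{j<k} \beta_j \binom{d-j}{k-j}$ then yields $\beta_k = 0$ for $k < m$ and $\beta_m = \binom{n}{m}$.

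Next I would compute
\[ \beta_{m+1} = \binom{n}{m+1} - \binom{n}{m}(d-m) = \frac{\binom{n}{m}}{m+1}\Bigl((n-m) - (m+1)(d-m)\Bigr). \]
Because $d - m = \left\lceil \frac{n-m}{m+1} \right\rceil \geq \frac{n-m}{m+1}$, the right-hand factor is non-positive, and is strictly negative precisely when $m+1$ does not divide $n-m$. In that generic case $\beta_{m+1} < 0$, giving $\qdepth(I_{n,m}) < d$, and the \textbf{sdepth} inequality then follows from Corollary $1.2$.

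The main obstacle is the divisibility case $(m+1) \mid (n-m)$, in which $\beta_{m+1} = 0$ and the argument must be pushed to a higher index. Here I would continue the recursion, expressing $\beta_{m+\ell}$ by using the identity $\binom{n}{m+\ell}/\binom{n}{m} = \prod_{i=1}^{\ell}(n-m-i+1)/(m+i)$ together with the previously computed $\beta_{m+j}$ for $j<\ell$, and simplify the resulting alternating binomial sum---paralleling the computation of \cite[Theorem 1.1]{cim}---until the first $k \leq d$ with $\beta_k < 0$ is located. The conceptual content lies in choosing the right simplification that makes the sign of $\beta_{m+\ell}$ transparent; the rest of the step is a mechanical binomial manipulation.
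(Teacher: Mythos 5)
Your computation of $\beta_{m+1}$ is the correct one, and it is worth stressing that it does \emph{not} agree with the paper's. In the recursion the coefficient of $\beta_j$ in $\beta_k$ is $\binom{d-j}{k-j}$ (this is what makes Lemma 1.8 come out right; the last term in the displayed definition, $\beta_{k-1}\binom{d-k}{1}$, is itself a typo for $\beta_{k-1}\binom{d-k+1}{1}$). Hence the coefficient of $\beta_m$ in $\beta_{m+1}$ is $\binom{d-m}{1}=d-m$, giving your formula $\beta_{m+1}=\binom{n}{m+1}-(d-m)\binom{n}{m}=\binom{n}{m}\bigl(\tfrac{n-m}{m+1}-(d-m)\bigr)$. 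The paper instead writes $\beta_{m+1}=\alpha_{m+1}-d\,\beta_m$, i.e.\ uses the coefficient $d$ rather than $d-m$, and that error is the only reason its argument appears to close the case $(m+1)\mid(n-m)$. With the correct coefficient you rightly find that for $d=\lceil\tfrac{n-m}{m+1}\rceil+m$ one gets $\beta_{m+1}<0$ exactly when $(m+1)\nmid(n-m)$, and $\beta_{m+1}=0$ otherwise.

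The gap you leave open --- the divisibility case --- cannot be filled by pushing the recursion to higher indices, because the stated inequality is false there. Take $n=3$, $m=1$: then $I_{3,1}=(x_1,x_2,x_3)=\mathbf m$, the claimed bound is $\lceil 2/2\rceil+1-1=1$, yet $\sdepth(\mathbf m)=\lceil 3/2\rceil=2$ by \cite{par}, and $\qdepth(\mathbf m)=\lceil 3/2\rceil=2$ by Proposition 1.6 of this very paper (for $d=2$ one has $\beta_0=0$, $\beta_1=3$, $\beta_2=3-3=0$, all non-negative). More generally, when $(m+1)\mid(n-m)$ and $d=\tfrac{n-m}{m+1}+m$, setting $q=d-m$ one computes $\beta_{m+2}=\binom{n}{m+2}-\binom{n}{m}\binom{q}{2}=\binom{n}{m}\,\tfrac{qm(q+1)}{2(m+2)}>0$, so the alternating sums you propose to analyze will not turn negative at the next step either. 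What your computation actually establishes is $\sdepth(I_{n,m})\le\qdepth(I_{n,m})\le\lfloor\tfrac{n-m}{m+1}\rfloor+m$, which coincides with the stated bound precisely when $(m+1)\nmid(n-m)$ and is the bound of \cite[Theorem 1.1]{cim}; the proposition (and the paper's proof) should be corrected to the floor version rather than patched.
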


\begin{proof}
Let $0\leq d\leq n$ be an integer. Note that $\alpha_0=\alpha_1=\cdots=\alpha_{m-1}=0$ and $\alpha_k=\binom{n}{k}$ for all $m\leq k\leq n$. It follows that $\beta_0=\beta_1=\cdots=\beta_{m-1}=1$ and $\beta_m=\alpha_m=\binom{n}{m}$. Therefore $\beta_{m+1}=\alpha_{m+1}- d \beta_m = \binom{n}{m+1} - d \binom{n}{m}$. One can easily check that if $d\geq \left\lceil \frac{n-m}{m+1} \right\rceil + m$, then $\beta_{m+1}<0$. Therefore, $\qdepth(I_{n,m})\leq \left\lceil \frac{n-m}{m+1} \right\rceil + m - 1$.
\end{proof}

\begin{lema}
Let $n\geq 2$, $1\leq d\leq n$ be two integers. Let $\alpha_k=\binom{n}{k}$, for all $0\leq k\leq d$. Let $\beta_0=\alpha_0$ and $\beta_k=\alpha_k - \beta_0 \binom{d}{k} - \beta_1 \binom{d-1}{k-1} - \cdots - \beta_{k-1}\binom{d-k}{1}$, for all $1\leq k\leq d$. Then $\beta_k=\binom{n+k-d-1}{k}$, for all $k\leq n$.
\end{lema}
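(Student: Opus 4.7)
My plan is to convert the recurrence into a formal power series identity and read off the coefficients. Multiplying the defining relation $\alpha_k=\sum_{j=0}^{k}\beta_j\binom{d-j}{k-j}$ by $x^k$ and summing over $0\le k\le d$, together with $\alpha_k=\binom{n}{k}$, gives
\[
(1+x)^n \;\equiv\; \sum_{j=0}^{d}\beta_j\, x^j(1+x)^{d-j} \pmod{x^{d+1}}.
\]
Each summand on the right has degree exactly $d$, so the right-hand side is a polynomial of degree $d$ and the congruence captures all the information in the recurrence.

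Next I divide both sides by $(1+x)^d$ in $\mathbb{Q}[[x]]$ to obtain
\[
(1+x)^{n-d} \;\equiv\; \sum_{j=0}^{d}\beta_j\left(\frac{x}{1+x}\right)^{\!j}\pmod{x^{d+1}}.
\]
Introducing $y=x/(1+x)$, so that $1-y=1/(1+x)$ and $y$ has $x$-adic valuation one, the congruence transports to
\[
(1-y)^{d-n} \;\equiv\; \sum_{j=0}^{d}\beta_j\, y^j\pmod{y^{d+1}}.
\]
Expanding by the negative binomial series $(1-y)^{-(n-d)}=\sum_{j\ge 0}\binom{n-d-1+j}{j}y^j$ and matching coefficients term by term yields $\beta_j=\binom{n+j-d-1}{j}$ for $0\le j\le d$, as desired.

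The only delicate step is justifying the substitution $y=x/(1+x)$: since it has positive $x$-adic valuation and invertible linear coefficient, it induces an order-preserving automorphism of $\mathbb{Q}[[x]]$, so truncation modulo $x^{d+1}$ is equivalent to truncation modulo $y^{d+1}$. A reader who prefers a purely finite argument can instead verify the equivalent Chu--Vandermonde style identity
\[
\binom{n}{k} \;=\; \sum_{j=0}^{k}\binom{n+j-d-1}{j}\binom{d-j}{k-j}
\]
by induction on $k$, but the generating-function route is cleaner.
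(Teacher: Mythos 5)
Your proof is correct and follows essentially the same route as the paper: the paper reduces the lemma to the identity $\binom{n}{k}=\sum_{j=0}^{k}\binom{n+j-d-1}{j}\binom{d-j}{k-j}$ and justifies it by comparing coefficients in $(1+t)^n=(1+t)^{n-d}(1+t)^d$, which is exactly your generating-function computation. Your version is the more carefully executed one, since you make explicit the substitution $y=t/(1+t)$ and the negative binomial expansion of $(1-y)^{d-n}$ that the paper's one-line justification leaves implicit.
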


\begin{proof}
In order to prove the Lemma, one has to check the identity: 
\[ \binom{n}{k} = \binom{n-d}{0}\binom{d}{k} + \binom{n-d+1}{1}\binom{d-1}{k-1}+\cdots + \binom{n+k-d-1}{k}\binom{d-k+1}{0},\]
for all $1\leq k\leq d$, which can be done be identify the coefficients of $t^k$ in the identity $(1+t)^n = (1+t)^{n-d}(1+t)^d$.
\end{proof}

\begin{lema}
Let $\mathcal P = \mathcal P'\cup \mathcal P'' \subset 2^{[n]}$ such that $\mathcal P'\cap \mathcal P''=\emptyset$. Then $\sdepth(\mathcal P)\geq \min\{\sdepth(\mathcal P'),\sdepth(\mathcal P'')\}$.
\end{lema}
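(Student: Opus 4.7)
The plan is to prove the lemma by an elementary direct-construction argument, since the only issue to verify is that combining optimal partitions of $\mathcal P'$ and $\mathcal P''$ yields a legitimate partition of $\mathcal P$.

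First I would set $d':=\sdepth(\mathcal P')$ and $d'':=\sdepth(\mathcal P'')$, and pick partitions $\mathbf P': \mathcal P'=\bigcup_{i=1}^{r'}[F_i',G_i']$ and $\mathbf P'': \mathcal P''=\bigcup_{j=1}^{r''}[F_j'',G_j'']$ realizing these values, so that $|G_i'|\geq d'$ and $|G_j''|\geq d''$ for all $i,j$. Then I would form the candidate partition
\[
\mathbf P: \mathcal P = \bigcup_{i=1}^{r'}[F_i',G_i'] \;\cup\; \bigcup_{j=1}^{r''}[F_j'',G_j''].
\]

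Next I would check that $\mathbf P$ is indeed a partition of $\mathcal P$. Intervals within $\mathbf P'$ (resp.\ $\mathbf P''$) are pairwise disjoint by hypothesis, and any interval $[F_i',G_i']\subset \mathcal P'$ is disjoint from any interval $[F_j'',G_j'']\subset\mathcal P''$ because $\mathcal P'\cap\mathcal P''=\emptyset$. Since $\mathcal P=\mathcal P'\cup\mathcal P''$, the intervals together cover $\mathcal P$, so $\mathbf P$ is a valid partition into intervals.

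Finally, by the very definition of the Stanley depth of a partition,
\[
\sdepth(\mathbf P) = \min\Bigl(\min_{i}|G_i'|,\;\min_{j}|G_j''|\Bigr) \geq \min(d',d''),
\]
and hence $\sdepth(\mathcal P)\geq\sdepth(\mathbf P)\geq \min\{\sdepth(\mathcal P'),\sdepth(\mathcal P'')\}$. There is no genuine obstacle here; the only point requiring care is verifying that the union of the two partitions produces pairwise disjoint intervals, which is immediate from $\mathcal P'\cap\mathcal P''=\emptyset$.
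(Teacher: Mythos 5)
Your proof is correct and follows exactly the paper's argument: the paper's own (one-line) proof also observes that the union of partitions of $\mathcal P'$ and $\mathcal P''$ gives a partition of $\mathcal P$, which you have simply spelled out in more detail.
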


\begin{proof}
It is enough to notice that, given two partitions of $\mathcal P'$ and $\mathcal P''$, one can obtain a partition of $\mathcal P$ as the union of them.
\end{proof}

\begin{teor}
Let $I\subset S$ be a squarefree monomial ideal generated in degree $m<n$ with $g=|G(I)|$.

$(a)$ If $\binom{n+m-d-1}{m}<g$, then $\qdepth(S/I)\leq d-1$. In particular, $\sdepth(S/I)\leq d-1$.

$(b)$ In particular, if $\binom{n-1}{m}<g$, then $\qdepth(S/I)= \sdepth(S/I)=m-1$. 
\end{teor}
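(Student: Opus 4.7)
The plan is to compute the sequence $\beta_k$ for the poset $\mathcal P_{S/I}$ explicitly, exploit the identity underlying Lemma $1.8$, and then apply the definition of quasi-depth together with Corollary $1.2$.

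First I would identify the values $\alpha_k=|\mathcal P_k|$. Since $I$ is squarefree with minimal generators all of degree exactly $m$, a subset of $[n]$ of size less than $m$ cannot contain the support of any generator, so $\alpha_k=\binom{n}{k}$ for $0\le k\le m-1$. A subset of size $m$ lies in $\mathcal P_{S/I}$ if and only if it is not the support of a minimal generator, giving $\alpha_m=\binom{n}{m}-g$.

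Next I would invoke the Vandermonde-type identity $\binom{n}{k}=\sum_{i=0}^{k}\binom{n+i-d-1}{i}\binom{d-i}{k-i}$ that underlies the proof of Lemma $1.8$. For $0\le k\le m-1$ only the pure binomial $\alpha_j$'s enter the recursion, so it yields $\beta_k=\binom{n+k-d-1}{k}$. For $k=m$, substituting these expressions together with $\alpha_m=\binom{n}{m}-g$ into the recursion for $\beta_m$, and peeling off the $i=m$ term of the identity above, I obtain $\beta_m=\binom{n+m-d-1}{m}-g$. Under the hypothesis of part $(a)$ this is strictly negative, hence $\qdepth(S/I)<d$; combined with Corollary $1.2$ this gives part $(a)$.

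For part $(b)$, setting $d=m$ in $(a)$ gives $\qdepth(S/I)\le m-1$ and $\sdepth(S/I)\le m-1$. The matching bound $\qdepth(S/I)\ge m-1$ follows by rerunning the $\beta$-calculation with $d=m-1$: now only $\alpha_0,\dots,\alpha_{m-1}$ enter, all pure binomials, so Lemma $1.8$ yields $\beta_k=\binom{n+k-m}{k}\ge 0$ for $0\le k\le m-1$. For the reverse inequality $\sdepth(S/I)\ge m-1$, I would split $\mathcal P_{S/I}=\mathcal P'\cup\mathcal P''$ via Lemma $1.9$, with $\mathcal P'=2^{[n]}_{\le m-1}$ the complete $(m-2)$-skeleton (all subsets of size $\le m-1$ are faces, since $I$ is generated in degree $m$) and $\mathcal P''=\{F\in\mathcal P_{S/I}:|F|\ge m\}$; the upper part trivially satisfies $\sdepth(\mathcal P'')\ge m$, and the skeleton admits an explicit matching of each subset of size $<m-1$ to an $(m-1)$-superset, producing a partition of $\mathcal P'$ with $\sdepth\ge m-1$. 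This last combinatorial matching on the Boolean lattice is the main obstacle, since everything else reduces cleanly to the identity of Lemma $1.8$ and the definition of $\qdepth$.
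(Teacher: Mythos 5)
Your argument is essentially the paper's own: part (a) is the identical computation of $\alpha_k$ and $\beta_k$ via the identity of Lemma $1.8$, giving $\beta_m=\binom{n+m-d-1}{m}-g<0$, and part (b) uses the same decomposition $\mathcal P_{S/I}=\mathcal P'\cup\mathcal P''$ with Lemma $1.9$ (your extra verification that $\qdepth\geq m-1$ via $d=m-1$ is redundant, since it already follows from $\sdepth\leq\qdepth$). The one step you flag as an obstacle --- partitioning $\mathcal P'=\{F:|F|\leq m-1\}$ into intervals whose tops all have size $m-1$ --- is exactly the point the paper also merely asserts; it is the standard partitionability of the $(m-2)$-skeleton of the simplex (shellable, hence partitionable with tops equal to facets), so no genuinely new idea is missing.
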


\begin{proof}
$(a)$ Let $\mathcal P=\mathcal P_{S/I}$. With the notations used in the first part of the section, we have $\alpha_k=\binom{n}{k}$, for all $0\leq k\leq m-1$, and $\alpha_m = \binom{n}{m}-g$. By Lemma $1.8$, we get $\beta_k=\binom{n+k-d-1}{k}$ for all $0\leq k<m$ and $\beta_m = \binom{n+m-d-1}{m}-g$. Therefore, is $\beta_m<0$, it follows that $\qdepth(S/I)\leq d-1$ and thus the required conclusion follows.

$(b)$ By $(a)$, we get $\sdepth(S/I)\leq \qdepth(S/I)\leq m-1$. In order to prove the converse, it is enough to notice that we can find partitions $\mathbf P_k$ of the poset $2^{[n]}$ with $\sdepth(\mathbf P_k)=k$ for any $0\leq k\leq n$. We can write $\mathcal P:=\mathcal P_{S/I} =\mathcal P' \cup \mathcal P''$, where $\mathcal P'=\{F\subset [n]\;:\; |F|<m \}$ and $\mathcal P''=\{F\in\mathcal P\;:\;|P|\geq m\}$. From the previous remark, $\sdepth(\mathcal P')=m-1$. On the other hand, $\sdepth(\mathcal P'')\geq m$. It follows, by Lemma $1.9$, that $\sdepth(\mathcal P)\geq m-1$, and thus $\sdepth(S/I)\geq m-1$.
\end{proof}

In \cite{pop}, D.\ Popescu proved a similar result for $\sdepth(I)$, namely, if  $\binom{n-1}{m}<g$ then $\sdepth(I)=\depth(I)=m$, see \cite[Remark 2.7]{pop}.

\section{Monomial ideal of independents sets of a graph}

Let $n\geq 3$ be an integer and let $G=(V,E)$ be a graph with the vertex set $V=[n]$ and edge set $E$. A set of vertices $S$ is \emph{independent} if there are no elements $i$ and $j$ of $S$ such that $\{i,j\}\in E$. We denote by $Ind(G)$ the set of all the independent sets of $G$. Let $\alpha(G)$ be the maximal cardinality of an independent set of $G$, called the \emph{independence number} of the graph $G$. Let $T:=K[s_i,t_i:\; i\in [n]]$ be the ring of polynomials in two sets of $n$ variables. For any $S\in Ind(G)$, we consider the monomial $m_S:=\prod_{i\in S}s_i \prod_{i\notin S}t_i$. Let $I:=(m_S\;:\;S\in Ind(G))\subset T$. The ideal $I$ is called the monomial ideal of independent sets associated to the graph $G$. The algebraic invariants of $I$ were studied by Olteanu in \cite{oana}. Later on, Cook II studied some generalization of this ideals, see \cite{cook}. We recall several results regarding the monomial ideal of independent sets. 


For any monomial $m\in G(I)$ with $m=m_S$, where $S\in Ind(G)$, we denote $m^{(s)}=\prod_{j\in S}s_j$ and $m^{(t)}=\prod_{j\notin S}t_j$ the $s$-part, respectively the $t$-part of $m$. Moreover, we denote $\deg_s(m)=\deg(m^{(s)})$ and $\deg_t(m)=\deg(m^{(t)})$. We consider the lexicographic order on $K[s_1,\ldots,s_n]$ induced by $s_1>s_2>\cdots>s_n$. Next, we define the monomial order $\succ$ on the monomials of $T$, given by: $m\succ m'$ if and only if $deg_s(m)<deg_s(m')$ or, $deg_s(m)=deg_s(m')$ and $m^{(s)}>_{lex}m'^{(s)}$. We assume that 
$G(I)=\{m_1,\ldots,m_r\}$, where $m_1\succ m_2\succ \cdots \succ m_r$. Assume $m_i=m_{S_i}$ for $S_i\in Ind(G)$.
Let $I_i:=(m_1,\ldots,m_i)$ for all $i\in [r]$. Let $a_k$ be the number of the number of independent
sets with $k$ elements of the graph $G$.
With this notations, we have the following result.

\begin{teor}(\cite[Theorem 2.2]{oana},\cite[Corollary 2.3]{oana})
$(I_{i-1}:m_i)=(t_r\;r\in S_i)$, for all $i>1$. In particular, $I$ has linear quotients and thus linear resolution. Moreover:

(a) $\reg(I)=n$.

(b) $\beta_i(I) = \sum_{k=0}^{\alpha(G)}a_k\binom{k}{i}$.

(c) $\pd(T/I)=\alpha(G)+1$.

(d) $\dim(T/I)=2n-2$.

(e) $\depth(T/I)=2n-\alpha(G)-1$.

(f) $T/I$ is Cohen-Macaulay if and only if $G$ is the complete graph.
\end{teor}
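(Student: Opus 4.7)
The strategy is to first establish the colon-ideal formula $(I_{i-1}:m_i) = (t_r : r \in S_i)$, which will yield linear quotients; every other assertion then follows from standard homological machinery applied to this structure.

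For the colon formula, fix $i > 1$. For the inclusion $\supseteq$, I note that for any $r \in S_i$ the set $S_i \setminus \{r\}$ is still independent and satisfies $m_{S_i \setminus \{r\}} = (t_r/s_r)\,m_i$; since $|S_i \setminus \{r\}| < |S_i|$, this generator is strictly $\succ$-greater, hence indexed by some $j < i$, so $t_r\,m_i = s_r\, m_{S_i\setminus\{r\}} \in I_{i-1}$. For the reverse inclusion, the colon $(I_{i-1}:m_i)$ is generated by the monomials $m_j/\gcd(m_i,m_j) = \prod_{k \in S_j \setminus S_i} s_k \prod_{k \in S_i \setminus S_j} t_k$ for $j < i$. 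The order $\succ$ forces $|S_j| \le |S_i|$, and if $S_i \subseteq S_j$ then $|S_i|\le|S_j|$ forces $S_i = S_j$, contradicting $j < i$. Hence $S_i \setminus S_j \neq \emptyset$, and each colon generator is divisible by some $t_r$ with $r \in S_i$. This proves linear quotients, and since all minimal generators of $I$ share the degree $n$, $I$ has a linear resolution, which is $\reg(I) = n$, proving (a).

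For (b) and (c), I would apply the standard formula for graded Betti numbers of an ideal with linear quotients: if $c_j$ is the minimal number of generators of $(I_{j-1}:m_j)$ then $\beta_i(I) = \sum_{j \ge 1}\binom{c_j}{i}$. Here $c_j = |S_j|$, so regrouping generators according to the cardinality of the independent set gives $\beta_i(I) = \sum_{k=0}^{\alpha(G)} a_k \binom{k}{i}$. The largest $i$ with $\beta_i(I) \neq 0$ is $i = \alpha(G)$, so $\pd(I) = \alpha(G)$ and $\pd(T/I) = \alpha(G)+1$.

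Parts (d)--(f) are then short. By Auslander--Buchsbaum, $\depth(T/I) = 2n - \pd(T/I) = 2n - \alpha(G) - 1$, which is (e). For (d), no principal ideal contains $I$: the generator $m_\emptyset = t_1\cdots t_n$ rules out every $(s_i)$, and the generator $m_{\{i\}}$ (which has no factor $t_i$) rules out every $(t_i)$, so $\dim(T/I) \le 2n-2$. Conversely, for any edge $\{i,j\}\in E$ no independent set contains both $i$ and $j$, so every $m_S$ is divisible by $t_i$ or $t_j$; thus $I \subseteq (t_i,t_j)$, a prime of height $2$, giving $\dim(T/I) \ge 2n-2$. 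Finally (f) is immediate from (d) and (e): $T/I$ is Cohen--Macaulay iff $\depth = \dim$, i.e.\ iff $\alpha(G) = 1$, which holds iff every pair of vertices forms an edge, i.e.\ $G = K_n$. The main obstacle is the colon-ideal identity; the key design feature is that $\succ$ is set up precisely so that earlier generators correspond to strictly smaller independent sets (or lex-earlier ones of the same size), which is what forces $S_i \setminus S_j \neq \emptyset$ in the decisive step.
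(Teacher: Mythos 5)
The paper gives no proof of this theorem: it is imported from Olteanu's work (cited as Theorem 2.2 and Corollary 2.3 of \cite{oana}), so there is no in-paper argument to compare yours against. On its own merits, your proof is correct and follows the route the statement itself telegraphs. The colon computation is right in both directions: the identity $t_r m_i = s_r m_{S_i\setminus\{r\}}$ together with $|S_i\setminus\{r\}|<|S_i|$ gives $\supseteq$, and for $\subseteq$ the colon generators $m_j/\gcd(m_i,m_j)=\prod_{k\in S_j\setminus S_i}s_k\prod_{k\in S_i\setminus S_j}t_k$ are each divisible by some $t_r$ with $r\in S_i$ because the order $\succ$ forces $|S_j|\leq|S_i|$ and hence $S_i\setminus S_j\neq\emptyset$ for $j<i$. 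Parts (a), (b), (c), (e) then follow exactly as you say from the Herzog--Takayama Betti number formula for ideals with linear quotients (with $c_j=|S_j|$) and Auslander--Buchsbaum, and (f) is the comparison of (d) with (e).

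The one blemish is in (d): your lower bound $\dim(T/I)\geq 2n-2$ uses the prime $(t_i,t_j)$ attached to an edge $\{i,j\}\in E$, which breaks down when $G$ has no edges --- a case the paper does allow (it explicitly discusses the discrete graph). The repair is immediate: every generator $m_S$ is divisible by $s_1$ if $1\in S$ and by $t_1$ if $1\notin S$, so $I\subseteq(s_1,t_1)$ unconditionally, and this height-two prime gives $\dim(T/I)\geq 2n-2$ for every graph. With that substitution the argument is complete.
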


In \cite{asia}, Asia Rauf proved the following result:

\begin{lema}
Let $0 \rightarrow U \rightarrow M \rightarrow N \rightarrow 0$ be a short exact sequence of $\mathbb Z^n$-graded $S$-modules. Then
$ \sdepth(M) \geq \min\{\sdepth(U),\sdepth(N) \}$ .
\end{lema}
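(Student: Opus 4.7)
The plan is to construct a Stanley decomposition of $M$ by combining Stanley decompositions of $U$ and $N$. Since we are working with $\mathbb Z^n$-graded modules over a field, the short exact sequence automatically splits as a sequence of $\mathbb Z^n$-graded $K$-vector spaces, so every homogeneous element of $N$ admits a homogeneous lift to $M$. This is the only ``soft'' ingredient I will need from the exactness.

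Concretely, I would fix Stanley decompositions
\[ \mathcal D_U: U = \bigoplus_{i=1}^r u_i K[Z_i], \qquad \mathcal D_N: N = \bigoplus_{j=1}^s v_j K[W_j], \]
with $\sdepth(\mathcal D_U)=\sdepth(U)$ and $\sdepth(\mathcal D_N)=\sdepth(N)$. Denote by $\pi:M\to N$ the quotient map. For each $j$, I would pick a $\mathbb Z^n$-homogeneous lift $\tilde v_j\in M$ of $v_j$, which is possible by the vector-space splitting. The candidate decomposition is then
\[ \mathcal D_M: M = \bigoplus_{i=1}^r u_i K[Z_i] \;\oplus\; \bigoplus_{j=1}^s \tilde v_j K[W_j]. \]

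The verification reduces to three checks, each obtained by pushing forward through $\pi$ and using the properties of $\mathcal D_N$. First, each $\tilde v_j K[W_j]$ is a free $K[W_j]$-module: a relation $a\tilde v_j=0$ maps to $av_j=0$ in $N$, forcing $a=0$ by freeness of $v_jK[W_j]$. Second, the sum is direct: a relation $u+\sum_j a_j\tilde v_j=0$ with $u\in U$ yields, after applying $\pi$, that $\sum_j a_jv_j=0$ in $N$, hence each $a_j=0$ by directness of $\mathcal D_N$, and then $u=0$. Third, every $x\in M$ lies in the sum: write $\pi(x)=\sum_j a_jv_j$, so $x-\sum_j a_j\tilde v_j\in\ker\pi=U$, and then decompose the $U$-part via $\mathcal D_U$.

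Once $\mathcal D_M$ is established as a genuine Stanley decomposition, its Stanley depth equals $\min\{\min_i|Z_i|,\min_j|W_j|\}=\min\{\sdepth(U),\sdepth(N)\}$, giving the desired inequality $\sdepth(M)\geq\min\{\sdepth(U),\sdepth(N)\}$. There is no serious obstacle: the only point requiring minor care is that the lifts $\tilde v_j$ be chosen homogeneous of the same $\mathbb Z^n$-degree as $v_j$, which is guaranteed by the $\mathbb Z^n$-graded vector-space splitting of the exact sequence.
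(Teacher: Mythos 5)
Your argument is correct. Note that the paper itself gives no proof of this lemma: it is quoted as a known result of A.\ Rauf (reference \cite{asia}), and your construction --- splicing a Stanley decomposition of $U$ with homogeneous lifts of a Stanley decomposition of $N$, then checking freeness, directness, and spanning by pushing forward along $\pi$ --- is precisely the standard argument given in that source, with all the necessary verifications present.
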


\begin{obs}
\emph{Let $\mathcal P$ be a set of squarefree monomials from $T$. If $u,v\in T$ with $u|v$ are two monomials, we denote $[u,v]:=\{w\;$ monomial $\;:\; u|w$ and $w|v\}$. Let $\mathbf P:\mathcal P=\bigcup_{i=1}^r [u_i,v_i]$ be a partition of $\mathbf P$. We denote $\sdepth(\mathbf P):=\min_{i\in [r]} \deg(v_i)$. Also, we define the Stanley depth of $\mathcal P$, to be the number $\sdepth(\mathcal P) = \max\{\sdepth(\mathbf P):\; \mathbf P \; \emph{is\; a\; partition\; of} \; \mathcal P \}.$}

\emph{We define the following two posets:
\[ \mathcal P_I:=\{m \in T\; square \; free\; :\; u_i|m \;\emph{for\;some}\;i\;\}\;\emph{and}\;\] 
\[ \mathcal P_{T/I}=\{m \in T\; square \; free\; :\; u_i\nmid m \;\emph{for\;all}\;i\;\} \]}
\emph{Herzog Vladoiu and Zheng proved in \cite{hvz} that $\sdepth(I)=\sdepth(\mathcal P_{I})$ and $\sdepth(T/I)=\sdepth(\mathcal P_{T/I})$. Let $\mathcal P=\mathcal P_{T/I}$.
Now, for $d\in\mathbb N$ and $u \in \mathcal P$, we denote
\[ \mathcal P_d = \{u \in\mathcal P\;:\; \deg(u)=d \}\;,\;\mathcal P_{d,u} = \{ u\in\mathcal P_d\;:\; u|v \}. \]}
\emph{Note that if $u \in \mathcal P$ such that $P_{d,u}=\emptyset$, then $\sdepth(\mathcal P)<d$. Indeed, let $\mathbf P:\mathcal P=\bigcup_{i=1}^r [u_i,v_i]$ be a partition of $\mathcal P$ with $\sdepth(\mathcal P)=\sdepth(\mathbf P)$. Since $\sigma\in\mathcal P$, it follows that  $u \in [u_i,v_i]$ for some $i$. If $\deg(v_i)\geq d$, then it follows that $\mathcal P_{u,d}\neq\emptyset$, since there are monomials in the interval $[u_i,v_i]$ with degree $d$ which are divisible by $u$, a contradiction. Thus, $\deg(v_i)<d$ and therefore $\sdepth(\mathcal P)<d$.}

\emph{Take $u=s_1\cdots s_{n-1}t_1\cdots t_{n-1}\notin I$. Since $s_nu \in I$ and $t_nu\in I$, it follows that $\mathcal P_{u,2n-1}=\emptyset$ and therefore $\sdepth(T/I)\leq 2n-2=\dim(T/I)$. Of course, this inequality could be also deduced, more directly, from the fact that $I$ is not a principal ideal.}
\end{obs}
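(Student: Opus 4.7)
The plan is to invoke the Herzog--Vladoiu--Zheng dictionary, which identifies $\sdepth(T/I)$ with $\sdepth(\mathcal P_{T/I})$. In this language, producing the upper bound $2n-2$ reduces to exhibiting a single squarefree monomial $u\in\mathcal P_{T/I}$ of degree $2n-2$ such that every squarefree multiple of $u$ of degree $2n-1$ already lies in $I$; such a $u$ acts as a local obstruction forcing every Stanley partition to contain an interval $[u_i,v_i]$ with $\deg(v_i)<2n-1$.

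First I would record the general blocking principle: if $u\in\mathcal P$ and $\mathcal P_{d,u}=\emptyset$, then $\sdepth(\mathcal P)<d$. Indeed, fixing any interval partition $\mathcal P=\bigcup_{i=1}^r [u_i,v_i]$, the element $u$ sits in some $[u_i,v_i]$; if $\deg(v_i)\ge d$, one could adjoin $d-\deg(u)$ variables drawn from $v_i$ to $u$ and thereby produce a squarefree multiple of $u$ of degree $d$ still lying in $\mathcal P$, contradicting $\mathcal P_{d,u}=\emptyset$. Hence $\deg(v_i)<d$, so $\sdepth(\mathcal P)<d$.

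The candidate is $u:=s_1\cdots s_{n-1}\,t_1\cdots t_{n-1}$, of degree $2n-2$. To verify $u\notin I$, any generator $m_S=\prod_{i\in S}s_i\prod_{i\notin S}t_i$ dividing $u$ would require $S\subset[n-1]$ (since $s_n\nmid u$) together with $[n]\setminus S\subset[n-1]$ (since $t_n\nmid u$), forcing the impossibility $n\in S$ and $n\notin S$. The only squarefree multiples of $u$ of degree $2n-1$ are $s_n u$ and $t_n u$. Using that any singleton of $[n]$ is an independent set of $G$, the generator $m_{\{n\}}=s_n t_1\cdots t_{n-1}$ divides $s_n u$, while $m_{\{1\}}=s_1 t_2\cdots t_n$ divides $t_n u$; both therefore lie in $I$.

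Combining these observations yields $\mathcal P_{2n-1,u}=\emptyset$, and the blocking principle delivers $\sdepth(T/I)\le 2n-2$, which matches $\dim(T/I)$ by Theorem~2.1(d). The only point needing attention is the blocking principle itself --- specifically, that variables from $v_i$ can freely be attached to $u$ while remaining inside the interval --- but this is immediate from the interval structure, and the rest reduces to explicit divisibility checks on the generators $m_S$.
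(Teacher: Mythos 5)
Your proposal is correct and follows exactly the paper's argument: the same blocking principle (an element $u\in\mathcal P$ with $\mathcal P_{d,u}=\emptyset$ forces every interval containing $u$ to have top of degree $<d$), the same witness $u=s_1\cdots s_{n-1}t_1\cdots t_{n-1}$, and the same conclusion via $s_nu, t_nu\in I$. You merely spell out the divisibility checks (via the generators $m_{\{n\}}$ and $m_{\{1\}}$) that the paper leaves implicit.
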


The main result of this section is the following Theorem.

\begin{teor}
$\sdepth(T/I)\geq \depth(T/I)$.
\end{teor}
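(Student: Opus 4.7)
By Theorem~2.1(e) we have $\depth(T/I)=2n-\alpha(G)-1$, so the task is to show $\sdepth(T/I)\geq 2n-\alpha(G)-1$. The plan is to construct, within the squarefree HVZ framework from Remark~2.3, a partition of the poset $\mathcal P:=\mathcal P_{T/I}$ into intervals $[u,v]$ whose tops all satisfy $\deg(v)\geq 2n-\alpha(G)-1$; by Remark~2.3 this will give $\sdepth(T/I)=\sdepth(\mathcal P)\geq 2n-\alpha(G)-1$. Writing each squarefree monomial of $T$ as $u_{A,B}:=\prod_{i\in A}s_i\prod_{j\in B}t_j$ with $A,B\subseteq[n]$, the first step is the membership criterion: $u_{A,B}\in I$ if and only if $A\cup B=[n]$ and $[n]\setminus B$ is an independent set of $G$. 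Indeed, $m_S\mid u_{A,B}$ iff $[n]\setminus B\subseteq S\subseteq A$; if such an independent $S$ exists, then the subset $[n]\setminus B$ of $S$ is also independent and $[n]\setminus B\subseteq A$ forces $A\cup B=[n]$, while conversely $S=[n]\setminus B$ works when both conditions hold.

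I would then split $\mathcal P=\mathcal P'\sqcup\mathcal P''$, where $\mathcal P':=\{(A,B)\in\mathcal P\mid A\cup B\neq[n]\}$ and $\mathcal P'':=\{(A,B)\in\mathcal P\mid A\cup B=[n]\}$, and apply the squarefree analogue of Lemma~1.9 to reduce to partitioning each piece with Stanley depth $\geq 2n-\alpha(G)-1$. On $\mathcal P''$ we have $|A|+|B|=n+|A\cap B|$, and the non-independence of $[n]\setminus B=A\setminus B$ forces $|A\cap B|\leq n-2$; a natural partition assigns to each $(A,B)\in\mathcal P''$ a canonical edge $e(A,B)\subseteq A\setminus B$ (say the lex-smallest), groups together the pairs sharing that edge, and matches each group to a maximal boundary element with the corresponding edge signature. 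On $\mathcal P'$ each pair has a nonempty uncovered set $U:=[n]\setminus(A\cup B)$; the partition here is assembled by greedily adjoining variables from $U$ to $A$ or to $B$ using an ordering that postpones moves which would leave $\mathcal P$.

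The main obstacle is controlling the degree of every top of the partition. Writing $a=|A'\setminus B'|$, $b=|B'\setminus A'|$, $u=|U'|$ for a top $(A',B')$, one computes $2n-\deg(u_{A',B'})=a+b+2u$, so the target $\deg(u_{A',B'})\geq 2n-\alpha(G)-1$ becomes $a+b+2u\leq\alpha(G)+1$. This is a genuine combinatorial constraint, particularly sharp on $\mathcal P'$ where $u\geq 1$. Meeting it requires using the independence number crucially: whenever a candidate top has $a+b+2u>\alpha(G)+1$, the slack in $U'$ together with part of $A'\setminus B'$ contains enough vertices to extend the current interval via an independent-set step, pushing the true top further up. Verifying that this greedy extension produces a genuine partition---covering $\mathcal P$ disjointly with all tops above the threshold---is the technical heart of the argument, and is where $\alpha(G)$ and Theorem~2.1 interact to yield the matching lower bound $\sdepth\geq\depth$.
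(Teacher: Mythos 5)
Your membership criterion for the squarefree monomials of $T$ lying in $I$ is correct, and the bookkeeping $2n-\deg(u_{A,B})=a+b+2u$ is fine, but the argument has two genuine gaps, the first of which is fatal to the chosen route. Splitting $\mathcal P=\mathcal P'\sqcup\mathcal P''$ and invoking the disjoint-union lemma (Lemma 1.9) requires \emph{each} piece separately to admit a partition with all tops of degree at least $2n-\alpha(G)-1$, and this is false. Take $G=K_3$, so $\alpha(G)=1$ and the target is $2n-2=4$. In $\mathcal P'$ the condition $A\cup B\neq[3]$ forces $|A|+|B|=|A\cup B|+|A\cap B|\leq 4$, with equality only for the three monomials $u_{A,A}$ with $|A|=2$; these are the only possible tops. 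Each of the twelve degree-$3$ elements of $\mathcal P'$ (those with $A'\cup B'$ a $2$-set $C$ and $|A'\cap B'|=1$) divides exactly one of them, namely $u_{C,C}$, so the interval topped by $u_{C,C}$ would have to contain all four degree-$3$ divisors of $u_{C,C}$, forcing its bottom to divide their greatest common divisor, which is $1$. Three pairwise disjoint intervals cannot all contain $1$, so $\sdepth(\mathcal P')\leq 3<4$. In other words, any correct partition of $\mathcal P_{T/I}$ must use intervals whose bottom has $A\cup B\neq[n]$ and whose top has $A\cup B=[n]$, and your reduction forbids exactly that. The second gap is that, even granting the split, the partitions are never actually produced: the ``canonical edge'' grouping on $\mathcal P''$ and the greedy adjunction on $\mathcal P'$ are statements of intent, and you yourself flag that verifying disjointness, coverage and the degree bound is the technical heart of the argument. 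That heart is the whole proof.

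For comparison, the paper takes an entirely different and much lighter route: it never constructs a partition. It inducts on $n$ using the short exact sequence $0\to T/(I:t_n)\to T/I\to T/(I,t_n)\to 0$ together with Rauf's inequality $\sdepth(M)\geq\min\{\sdepth(U),\sdepth(N)\}$ (Lemma 2.2). Both $(I:t_n)$ and $(I,t_n)$ are, up to adjoining free variables and multiplying by $s_n$, independence ideals of the smaller graphs obtained by deleting the vertex $n$, respectively $n$ together with its neighbours; the induction hypothesis, the behaviour of $\sdepth$ under adding variables, and the depth formula of Theorem 2.1(e) then close the argument. If you want a direct combinatorial proof you would have to design intervals crossing the locus $A\cup B=[n]$, which is essentially what the exact-sequence argument accomplishes implicitly.
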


\begin{proof}
We use induction on $n\geq 1$. If $n=1$, then $I=(s_1,t_1)$ and thus $\sdepth(T/I) = \depth(T/I) =2$. Assume $n\geq 2$. 
We consider the short exact sequence:
\[ 0 \rightarrow T/(I:t_n) \rightarrow T/I \rightarrow T/(I,t_n) \rightarrow 0. \]

Let $T'=K[t_1,\ldots,t_{n-1},s_1,\ldots,s_{n-1}]$ and let $J\subset T'$ be the ideal of the independent sets of the graph $G'=G\setminus\{n\}$. 
Note that $\alpha(G')\leq\alpha(G)$. Also $(I:t_n) = J T$ and therefore, according to \cite[Lemma 3.6]{hvz}, $\sdepth(T/(I:t_n)) = \sdepth_{T'}(T'/J) \geq 2 + \depth_{T'}(T'/J) = 2 + 2(n-1) - \alpha(G') - 1 = 2n - \alpha(G') - 1 \geq 2n-\alpha(G)-1$. 

On the other hand, $(I,t_n)$ is generated by $t_n$ and all the monomials of the form $m_S$ with  $S\in Ind(G)$ and $n\in S$. Let $G''$ be the graph obtained from $G$ be deleting the vertex $\{n\}$ and all the vertices adiacent to $\{n\}$. For convenience, we may assume that $G''$ is a graph on the vertex set $[m]$ with $m<n$. Note that $Ind(G'')=\{S\setminus \{n\} :\;S\in Ind(G),\; n\in S\}$. Thus, $\alpha(G'')\leq \alpha(G)$ and, moreover, $(I,t_n)= (t_n,s_nL)$, where $L$ is the ideal of independent sets of $G''$. Using the induction hypothesis, \cite[Lemma 3.6]{hvz} and \cite[Theorem 1.4]{mir}, we get $\sdepth(T/(I,t_n))=1+2m-1-\alpha(G'') + 2(n-m-1) = 2n-2 - \alpha(G'')\geq 2n-1-\alpha(G)$. Therefore, by Lemma $2.2$ we are done.
\end{proof}

\begin{cor}
If $G$ is the complete graph, then $\sdepth(T/I)=\depth(T/I)=2n-2$.
\end{cor}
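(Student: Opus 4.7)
The plan is simply to combine three facts already available in the paper. First, for the complete graph $K_n$ on $n \geq 2$ vertices, every pair of vertices is adjacent, so the independent sets of $G$ are only the empty set and the singletons $\{i\}$; hence $\alpha(G) = 1$. Then Theorem $2.1(e)$ evaluates to $\depth(T/I) = 2n - \alpha(G) - 1 = 2n - 2$, which pins down the right-hand side of the desired equality.

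Next I would invoke Theorem $2.4$ of this section to obtain the lower bound $\sdepth(T/I) \geq \depth(T/I) = 2n - 2$. For the matching upper bound, the construction in Remark $2.3$ exhibits a squarefree monomial $u = s_1 \cdots s_{n-1} t_1 \cdots t_{n-1} \notin I$ such that both $s_n u$ and $t_n u$ lie in $I$, which forces $\sdepth(T/I) \leq 2n - 2 = \dim(T/I)$. Pinching the two bounds yields $\sdepth(T/I) = 2n - 2 = \depth(T/I)$. There is no real obstacle here: the statement is a clean corollary of Theorem $2.4$ and Remark $2.3$, the only graph-theoretic input being the trivial evaluation $\alpha(K_n) = 1$.
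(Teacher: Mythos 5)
Your proof is correct and follows exactly the paper's own argument: compute $\alpha(G)=1$ and apply Theorem $2.1(e)$ to get $\depth(T/I)=2n-2$, use Theorem $2.4$ for the lower bound on $\sdepth(T/I)$, and Remark $2.3$ for the matching upper bound. No differences worth noting.
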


\begin{proof}
Since $G$ is the complete graph, we $\alpha(G)=1$ and, moreover: 
$$I=(t_1\cdots t_n, s_1t_2\cdots t_n, \ldots, s_nt_1 \cdots t_{n-1}).$$ 
According to Theorem $2.1$ and Theorem $2.4$, it follows that $\sdepth(T/I)\geq \depth(T/I)=2n-2$.
On the other hand, by Remark $2.3$, we have $\sdepth(T/I)\leq 2n-2$, and thus we are done.
\end{proof}


Let $g:=|G(I)|$ the number of minimal monomial generators of $I$. Note that $|G(I)|\leq \sum_{i=0}^{\alpha(G)}\binom{n}{i}$. We denote $\gamma(G)=\max\{d:\; \binom{3n-d-1}{n}\geq g\}$. Note that $n-1 \leq \gamma(G) \leq 2n-2$. Indeed, for $d=n-1$, we have $|G(I)|\leq 2^n$ and $\binom{3n-d-1}{n}=\binom{2n}{n}\geq 2^n$ for all $n\geq 1$. On the other hand, if $d>2n-2$, then $3n-d-1<n$ and therefore $\binom{3n-d-1}{n}=0<1\leq |G(I)|$. 

As a direct consequence of Theorem $1.10$ and Theorem $2.4$ we get the following corollary.

\begin{cor}
$\depth(T/I)\leq \sdepth(T/I)\leq \gamma(G) \leq 2n-2$.
\end{cor}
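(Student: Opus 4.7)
The plan is to assemble the three inequalities in the displayed chain from ingredients already at hand. The left inequality $\depth(T/I)\leq \sdepth(T/I)$ is exactly the content of Theorem $2.4$, so nothing new is needed there. The right inequality $\gamma(G)\leq 2n-2$ was verified in the paragraph immediately preceding the corollary, by observing that if $d>2n-2$, then $3n-d-1<n$, forcing $\binom{3n-d-1}{n}=0<g$, which contradicts the defining condition of $\gamma(G)$.

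For the middle inequality $\sdepth(T/I)\leq \gamma(G)$, the idea is to apply Theorem $1.10(a)$ to the squarefree monomial ideal $I\subset T$. Here the ambient polynomial ring $T$ has $2n$ variables, and every minimal generator $m_S=\prod_{i\in S}s_i\prod_{i\notin S}t_i$ has degree exactly $n$, so $I$ is generated in degree $n$. Writing $g=|G(I)|$ and substituting $n\rightsquigarrow 2n$ and $m\rightsquigarrow n$ in the hypothesis of Theorem $1.10(a)$, the inequality $\binom{n+m-d-1}{m}<g$ becomes $\binom{3n-d-1}{n}<g$, and its conclusion is $\sdepth(T/I)\leq d-1$.

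Now one simply chooses $d:=\gamma(G)+1$. By the maximality in the definition $\gamma(G)=\max\{d:\binom{3n-d-1}{n}\geq g\}$, this value of $d$ fails the defining inequality, i.e.\ $\binom{3n-(\gamma(G)+1)-1}{n}<g$. Theorem $1.10(a)$ then yields
\[
\sdepth(T/I)\leq (\gamma(G)+1)-1=\gamma(G),
\]
which is the remaining inequality. Combining with Theorem $2.4$ and the bound $\gamma(G)\leq 2n-2$ noted above completes the chain.

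I do not expect any real obstacle in this argument; it is essentially a bookkeeping step. The only point that requires care is to keep straight that the ``$n$'' in the statement of Theorem $1.10$ refers to the number of variables, which here is $2n$, while the degree of the generators is $n$. Once the substitution is made correctly, the definition of $\gamma(G)$ is precisely engineered so that the threshold in Theorem $1.10(a)$ is crossed at $d=\gamma(G)+1$.
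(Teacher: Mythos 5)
Your argument is correct and matches the paper's intent exactly: the paper presents this corollary as a direct consequence of Theorem $2.4$ (for the left inequality) and Theorem $1.10(a)$ applied in $T$ with $2n$ variables and generators of degree $n$ (for the middle one), with $\gamma(G)\leq 2n-2$ already checked in the preceding paragraph. Your substitution $n\rightsquigarrow 2n$, $m\rightsquigarrow n$ and the choice $d=\gamma(G)+1$ is precisely the intended bookkeeping.
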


Our computer experimentations (see \cite{cocoa} and \cite{rin}) lead us to the following conjecture:

\begin{conj}
$\sdepth(T/I)=\gamma(G)$.
\end{conj}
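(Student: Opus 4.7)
The upper bound $\sdepth(T/I) \leq \gamma(G)$ is Corollary $2.6$, so the task is to establish the lower bound $\sdepth(T/I) \geq \gamma(G)$. By the method of Herzog--Vladoiu--Zheng, this amounts to exhibiting a partition $\mathcal P_{T/I} = \bigcup_i [F_i, G_i]$ with $|G_i| \geq \gamma(G)$ for every $i$. As a sanity check, I would first try to verify that $\qdepth(T/I) = \gamma(G)$ exactly (not only $\leq$). Since $I$ is generated in degree $n$, Remark $1.3$ gives $\alpha_k = \binom{2n}{k}$ for $0 \leq k < n$, $\alpha_n = \binom{2n}{n} - g$, and $\alpha_k = \binom{2n}{k} - \sum_j a_j \binom{n-j}{k-n}$ for $k \geq n$ (by partitioning the squarefree monomials of $I$ of degree $k$ according to the independent "core" $D = [n]\setminus B$). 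Lemma $1.8$ (applied with $2n$ in place of $n$) then yields $\beta_k = \binom{2n+k-d-1}{k} > 0$ for $0 \leq k < n$ and $\beta_n = \binom{3n-d-1}{n} - g$, which is non-negative precisely when $d \leq \gamma(G)$. The non-negativity of $\beta_k$ for $n < k \leq \gamma(G)$ would need to be checked via a binomial identity using Theorem $2.1$(b).

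For the existence of the partition, the natural plan is to refine the induction in the proof of Theorem $2.4$. The short exact sequence
\[ 0 \to T/(I:t_n) \to T/I \to T/(I,t_n) \to 0 \]
together with Lemma $2.2$ reduces the problem to proving $\sdepth(T/(I:t_n)) \geq \gamma(G)$ and $\sdepth(T/(I,t_n)) \geq \gamma(G)$. Writing $G' = G \setminus \{n\}$ and $G''$ for the graph obtained by removing $n$ together with its closed neighborhood, one would invoke the inductive hypothesis on the ideals of independent sets of $G'$ and $G''$, then apply \cite[Lemma $3.6$]{hvz} and \cite[Theorem $1.4$]{mir} to transfer the Stanley depths back to $T$.

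The main obstacle will be the arithmetic comparison between $\gamma(G)$ and the adjusted $\gamma$-values of the subgraphs. The generator counts split as $g = g' + g''$ where $g'$ counts the independent sets of $G$ avoiding vertex $n$ and $g''$ those containing it, but the binomial $\binom{3n-d-1}{n}$ defining $\gamma$ does not respect this additive split. Proving the required inequality likely needs a delicate case analysis depending on the position of $g$ within the sequence $\binom{3n-d-1}{n}$, and it is conceivable that the bound obtained by induction is strictly smaller than $\gamma(G)$ in some configurations. As a backup, one could attempt a direct combinatorial construction: the low-degree portion of $\mathcal P_{T/I}$ (degrees $< n$) is the full truncated Boolean lattice on $2n$ letters and admits a symmetric-chain-style partition reaching level $\gamma(G)$, so the difficulty concentrates on the degrees $\geq n$, where the $g$ missing monomials must be compensated by carefully chosen interval tops of size $\gamma(G)$.
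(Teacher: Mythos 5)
The statement you are trying to prove is Conjecture $2.7$: the paper does not prove it, but only supports it by computer experiments and by checking special cases in Remark $2.8$ (the complete graph, the discrete graph on $n\leq 3$ vertices, and the case $\gamma(G)=2n-\alpha(G)-1$, where Corollary $2.6$ forces equality). Your text is accordingly a research plan rather than a proof, and you yourself flag the two places where it is incomplete; both gaps are genuine. The decisive one is the inductive step. Lemma $2.2$ applied to the sequence $0\to T/(I:t_n)\to T/I\to T/(I,t_n)\to 0$ only yields the minimum of the two branch bounds, and to get $\gamma(G)$ from the colon branch you would need $\sdepth(T/(I:t_n))\geq \gamma(G)$, i.e.\ (via \cite[Lemma 3.6]{hvz}) $\gamma(G')+2\geq\gamma(G)$, which unwinds to the inequality $\binom{3n-\gamma(G)-3}{n-1}\geq g'$. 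The only information available is $\binom{3n-\gamma(G)-1}{n}\geq g\geq g'$, and since $\binom{N-2}{n-1}$ is much smaller than $\binom{N}{n}$, the needed inequality simply does not follow; the recursion degrades toward the bound $2n-\alpha(G)-1=\depth(T/I)$ that Theorem $2.4$ already gives, and the paper's own Example $2.9$ (the path on $5$ vertices, where $\gamma(G)=7>\depth(T/I)=6$) shows this is strictly weaker than the conjectured value. So the induction as set up cannot close the gap without a substantially new idea.

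Two smaller points. First, your formula $\alpha_k=\binom{2n}{k}-\sum_j a_j\binom{n-j}{k-n}$ for $k>n$ is not correct as written: a squarefree monomial $w$ of degree $k>n$ lies in $I$ if and only if there exists an independent set $S$ with $[n]\setminus B\subset S\subset A$ (where $A=\{i:s_i\mid w\}$, $B=\{i:t_i\mid w\}$), and such $w$ is typically divisible by several generators $m_S$, so the count requires inclusion--exclusion (or a genuinely different partition of the monomials), not a sum over generators; also the number of squarefree degree-$k$ multiples of a fixed $m_S$ is $\binom{n}{k-n}$, independent of $|S|$. Second, even the preliminary claim $\qdepth(T/I)=\gamma(G)$ is unverified for $n<k\leq\gamma(G)$, and if it failed the conjecture itself would be false by Corollary $1.2$. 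In short: the upper bound is indeed Corollary $2.6$, but the lower bound remains open, and neither the inductive route nor the direct partition you sketch is carried far enough to constitute a proof.
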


\begin{obs}
\emph{Note that if $I$ is minimally generated by $g>\binom{2n-1}{n}$ squarefree monomials of degree $n$, Theorem $1.10(b)$ implies $\sdepth(T/I)=n-1$. So, the conjecture holds in this very particular case. This is the case, for instance, when $G$ is the discrete graph on $n\leq 3$ vertices. Also, by Corollary $2.5$, the conjecture holds for $G$ the complete graph.}

\emph{Moreover, if $\gamma(G)=2n-\alpha(G)-1$, then, by Corollary $2.6$, it follows that $\sdepth(T/I)=\depth(T/I)$. Note that $\gamma(G)=2n-\alpha(G)-1$ is equivalent to $\binom{n+\alpha(G)-1}{n} < g$. This is a special condition for a graph $G$, which holds rarely.}
\end{obs}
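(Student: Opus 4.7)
The plan is to verify each of the assertions in Remark 2.8 as a direct consequence of Theorems 1.10 and 2.1 together with Corollaries 2.5 and 2.6.

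For the first assertion (the conjecture under $g > \binom{2n-1}{n}$), I would apply Theorem 1.10(b) to $T$, regarded as a polynomial ring in $2n$ variables, with $I$ generated in degree $n$; our hypothesis is precisely the hypothesis of that theorem, so it yields $\sdepth(T/I) = n-1$. To match this against the conjectural value $\gamma(G)$, I would plug $d = n$ and $d = n-1$ into the defining inequality $\binom{3n-d-1}{n} \geq g$: at $d = n$ the binomial equals $\binom{2n-1}{n} < g$ by hypothesis, while at $d = n-1$ it equals $\binom{2n}{n} = 2\binom{2n-1}{n} \geq 2^n \geq g$, using the trivial bound $g \leq |Ind(G)| \leq 2^n$. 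Hence $\gamma(G) = n-1$, confirming the conjecture in this case. The discrete-graph example is then a small numerical check of $2^n > \binom{2n-1}{n}$ for those values of $n$ where it holds; the complete-graph case is covered by Corollary 2.5 together with the direct computation $g = n+1$ and $\binom{n+1}{n} = n+1 > 1 = \binom{n}{n}$, which gives $\gamma(G) = 2n-2$.

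For the equivalence $\gamma(G) = 2n - \alpha(G) - 1 \iff \binom{n + \alpha(G) - 1}{n} < g$, I would unwind the definition of $\gamma(G)$ at the two critical values $d = 2n - \alpha(G) - 1$ and $d = 2n - \alpha(G)$, where the binomials $\binom{3n-d-1}{n}$ become $\binom{n+\alpha(G)}{n}$ and $\binom{n+\alpha(G)-1}{n}$, respectively. The key step is that the upper one is \emph{always} $\geq g$, which I would establish via the Vandermonde identity
\[
\binom{n+\alpha(G)}{\alpha(G)} = \sum_{k=0}^{\alpha(G)}\binom{n}{k}\binom{\alpha(G)}{k} \;\geq\; \sum_{k=0}^{\alpha(G)}\binom{n}{k} \;\geq\; g,
\]
the last inequality because every independent set of $G$ has cardinality at most $\alpha(G)$. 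This collapses the two-sided defining condition of $\gamma(G)$ to the single strict inequality stated in the remark. The consequence $\sdepth(T/I) = \depth(T/I)$ is then immediate from the sandwich $\depth(T/I) \leq \sdepth(T/I) \leq \gamma(G)$ of Corollary 2.6 combined with the value $\depth(T/I) = 2n - \alpha(G) - 1$ from Theorem 2.1(e).

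The only ingredient that is not pure definition-chasing is the Vandermonde bound $\binom{n + \alpha(G)}{n} \geq g$; this is what makes the equivalence in the last assertion one-sided, and it is the one place where some care with binomial identities is genuinely needed. Everything else in the remark is an unwinding of the definition of $\gamma(G)$ and an appeal to previously proved results.
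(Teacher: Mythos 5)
Your proposal is correct and follows the same definition-unwinding that the paper's remark itself encodes (the remark is its own justification; no separate proof is given). The one ingredient you supply that the paper leaves implicit is the bound $\binom{n+\alpha(G)}{n}\geq g$, which makes the equivalence one-sided; your Vandermonde argument is fine, and the same inequality also falls out of Corollary 2.6, since $\gamma(G)\geq \sdepth(T/I)\geq \depth(T/I)=2n-\alpha(G)-1$ already forces $\binom{n+\alpha(G)}{n}\geq g$. One caveat worth recording: your hedge about the discrete graph is warranted, because for $n=3$ one has $g=2^3=8<\binom{5}{3}=10$, so the paper's claim that the discrete graph on $n\leq 3$ vertices satisfies $g>\binom{2n-1}{n}$ actually fails at $n=3$ and holds only for $n\leq 2$.
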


\begin{exm}
\emph{Let $G=(V,E)$ be the cycle of length $4$, i.e. $V=[4]$ and $E=\{\{1,2\},\{2,3\},\linebreak \{3,4\},\{4,1\}\}$. The independent sets of $G$ are $\emptyset, \{1\},\{2\},\{3\},\{4\}, \{1,3\}$ and $\{2,4\}$. The associated ideal $I\subset T=K[s_1,s_2,s_3,s_4,t_1,t_2,t_3,t_4]$ of independent sets is generated by $m_1=t_1t_2t_3t_4$, $m_2=s_1t_2t_3t_4$, $m_3=s_2t_1t_3t_4$, $m_4=s_3t_1t_2t_4$, $m_5=s_4t_1t_2t_3$, $m_6=s_1s_3t_2t_4$ and $m_7=s_2s_4t_1t_3$. By Theorem $2.1$, one has $\reg(I)=4$, $\pd(T/I)=3$, $\dim(T/I)=6$ and $\depth(T/I)=5$. Moreover, $g=|G(I)|=7$ and thus $\gamma(G)=5$, since $\binom{6}{4}=15 >7$, but $\binom{5}{4}=5<7$. According to Corollary $2.6$, we get $\sdepth(T/I)=5$.}

\emph{Let $G=(V,E)$ by the line ideal of length $5$, i.e. $V=[5]$ and $E=\{\{1,2\},\{2,3\}, \{3,4\},\linebreak \{4,5\}\}$. Note that $Ind(G)=\{ \emptyset, \{1\}, \{2\}, \{3\}, \{4\}, \{5\}, \{1,3\}, \{1,4\}, \{1,5\}, \{2,4\}, \{2,5\},\linebreak \{3,5\}, \{1,3,5\}\}$. We have $\alpha(G)=3$ and $g=|Ind(G)|=13$. Note that $\gamma(G)=7>\depth(T/I)=6$, since $\binom{7}{5}=12>13$ and $\binom{6}{5}=6<13$.}
\end{exm}

\vspace{2mm} \noindent {\footnotesize
\begin{minipage}[b]{15cm}
Mircea Cimpoea\c s, Simion Stoilow Institute of Mathematics, Research unit 5, P.O.Box 1-764,\\
Bucharest 014700, Romania, E-mail: mircea.cimpoeas@imar.ro
\end{minipage}}

\end{document}